\newtheorem{theorem}{Theorem}[section]
\newtheorem{lemma}[theorem]{Lemma}
\newtheorem{corollary}[theorem]{\rm\bfseries Corollary}
\begin{document}

\title{ The nullity of the net Laplacian matrix of a signed graph
}
\author{Zhuang Xiong \thanks{Corresponding author: zhuangxiong@hunnu.edu.cn}\\
{\small Key Laboratory of High Performance Computing and Stochastic Mathematics (Ministry of Education),} \\
{\small College of Mathematics and Statistics, Hunan Normal University, Changsha, Hunan 410081,  China} \\
}

\date{}
\maketitle
\begin{abstract}

  Let $\Gamma = (G, \sigma)$ be a signed graph, where $G = (V(G),E(G))$ is an (unsigned) graph, called the underlying graph. The net Laplacian matrix of $\Gamma$ is defined as $L^{\pm}(\Gamma) = D^{\pm}(\Gamma) - A(\Gamma)$, where $D^{\pm}(\Gamma)$ and $A(\Gamma)$ are the diagonal matrix of net-degrees and the adjacency matrix of $\Gamma$, respectively.
  The nullity of $L^{\pm}(\Gamma)$, written as $ \eta (L^{\pm} (\Gamma))$, is the multiplicity of 0 as an eigenvalue of $L^{\pm}(\Gamma)$. In this paper, we focus our attention on the nullity of the net Laplacian matrix of a connected  signed graph $\Gamma$ and prove that $1 \leq  \eta (L^{\pm} (\Gamma)) \leq min\{ \beta(\Gamma) + 1, |V(\Gamma)| - 1 \}$, where $\beta(\Gamma) = |E(\Gamma)| - |V(\Gamma)| + 1$ is the cyclomatic number of $\Gamma$. The connected signed graphs with nullity $|V(\Gamma)| - 1$ are completely determined. Moreover, we characterize the signed cactus graphs with nullity $1$ or $\beta(\Gamma) + 1$.\\
\\
\noindent
\textbf{AMS classification}: 05C50, 05C22\\
{\bf Keywords}:  signed graph, nullity, cyclomatic number
\end{abstract}
\baselineskip=0.25in

\section{ Introduction}\label{sec:into}
A signed graph $\Gamma$ of order $n$ is a pair $(G,\sigma)$, where $G = (V(G),E(G))$ is an $n$-vertices (unsigned) graph with vertex set $V(G)$ and edge set $E(G)$, called the underlying graph, and $\sigma: E(G) \rightarrow \{-1, +1\}$ is the sign function.
For a vertex $v$ of $\Gamma$, the positive degree $d_{\Gamma}^+(v)$ of $v$ in $\Gamma$ is the number of positive neighbours of $v$ (i.e., those adjacent to $v$ by a positive edge). In the similar way, we define the negative degree $d_{\Gamma}^-(v)$.
The net-degree of $v$ in $\Gamma$ is defined as $d_{\Gamma}^{\pm}(v) = d_{\Gamma}^{+}(v) - d_{\Gamma}^{-}(v)$.

\indent Given a matrix $M$, the spectrum of $M$ is denoted by $Spec(M) = \{\lambda_1(M)^{k_1}, \cdots, \lambda_i(M)^{k_i} \}$, where the superscripts denote the multiplicities of corresponding eigenvalues. Throught this paper, the eigenvalues of any matrix are all arranged in non-increasing order. Denote by $r(M)$ and $\eta(M)$ the rank and the nullity of $M$, respectively. The adjacency matrix $A(\Gamma)$ of a signed graph $\Gamma$ is obtained from the adjacency matrix of the underlying graph by reversing the sign of all 1s corresponding to negative edges. The net Laplacian matrix of $\Gamma$ is defined as $L^{\pm}(\Gamma) = D^{\pm}(\Gamma) - A(\Gamma)$, where $D^{\pm}(\Gamma)$ is the diagonal matrix of net-degrees. \\
\indent Recently the nullity of the adjacency matrix of a simple graph has received a lot of attention. Collatz and Sinogowitz \cite{collatz1957spektren} posed the problem of characterizing all singular graphs. The nullity of a graph is a classical topic in spectrum theory of graphs due to its applications in chemistry. In the H\"{u}ckel molecular orbital model, if $\eta(A(G)) > 0$ for the molecular graph $G$, then the corresponding chemical compound is highly reactive and unstable, or nonexistent (see \cite{atkins2006physical} or \cite{cvetkovic1972the}). In studing the above problem, some attentions are attracted to bound the nullity of a graph by using some of the structure parameters, such as the order, the maximum degree, the number of pendent vertices, and the number of cyclomatic number of the graph, etc (see \cite{chang2011rank4,chang2012rank5,cheng2007on,cvetkovic1972the,
omidi2009bipart,wang2020proof,wang2022graphs}).\\
\indent The nullity of the adjacency matrix of a signed graph also has been widely studied (see \cite{chen2022rank,fan2013note,fan2014nullity,liu2014further,lu2018the} and reference therein). This problem is closely related to the minimum rank problem of symmetric matrices whose patterns are described by graphs \cite{fallat2007the}. Here we consider this problem with respect to the net Laplacian matrix. A significance of the spectrum of the net Laplacian matrix in control theory was recognized in \cite{gao2018equitable}. The same topic is studied in \cite{stanic2020net} from a graph theoretic insight. The advantages of use of the net Laplacian matrix instead of the Laplacian matrix (in study of signed graphs) is investigated in \cite{stanic2020on}. Very recently, Mallik \cite{mallik2022matrix} introduced a new oriented incidence matrix of a signed graph, by which the matrix tree theorem of the net Laplacian matrix of a signed graph is given. In this paper, we investigate the nullity of the net Laplacian matrix of a connected signed graph, which relies on the study of the characteristic polynomial of the net Laplacain matrix of this signed graph. In 1982, Chaiken \cite{chaiken1982com} gave a combinatorial proof of the all minors matrix tree theorem. In 2016, Buslov \cite{buslove2016coefficients} proposed an alternative proof based on the straightforward computation of the minors of incidence matrices and on revealing a connection of them with forests. The above two papers established a way for computing any coefficient of the characteristic polynomial of the Laplace matrix of a weighted digraph. Here we rewrite it in a form of the net Laplacian matrix of a signed graph $\Gamma$. The proof can be obtained directly from \cite[All minors matrix tree theorem]{chaiken1982com} or \cite[Theorem 2]{buslove2016coefficients}. Denote by  $\mathcal{F}^k(\Gamma)$ the set of all spanning $k$-component forests of $\Gamma$. For $F^k(\Gamma) \in \mathcal{F}^k(\Gamma)$, $a(F^{k}(\Gamma)) = n_1 \ldots n_k$, where $n_i$ is the number of the vertices of $i$-component of $F^k(\Gamma)$.
    \begin{theorem}\label{tho:cof}
      Let $$ P_{ L^{\pm}(\Gamma) } (x)=  \det(xI - L^{\pm}(\Gamma)) = \sum_{k=0}^{n} c_k x^k $$ be the characteristic polynomial of the net Laplacian matrix of a signed graph $\Gamma$. Then
    $$c_k = (-1)^{n-k}(\sum_{F^k(\Gamma) \in \mathcal{F}^k(\Gamma)} a(F^{k}(\Gamma))\sigma(F^k(\Gamma))).$$
    \end{theorem}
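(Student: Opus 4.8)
The plan is to realize the net Laplacian as a product of an incidence matrix with its transpose, and then to extract the coefficient $c_k$ from the principal minors of $L^{\pm}(\Gamma)$ via the Cauchy--Binet formula. Concretely, fix an arbitrary orientation of $\Gamma$ and let $B$ be the $n \times |E(\Gamma)|$ complex incidence matrix whose column indexed by an edge $e = uv$ equals $w_e(\mathbf{x}_u - \mathbf{x}_v)$, where $\mathbf{x}_u$ is the $u$-th standard basis vector, $w_e = 1$ if $e$ is positive and $w_e = \sqrt{-1}$ if $e$ is negative. A direct edge-by-edge check shows that a positive edge contributes $(\mathbf{x}_u-\mathbf{x}_v)(\mathbf{x}_u-\mathbf{x}_v)^{T}$ to $BB^{T}$ while a negative edge contributes its negative; summing over all edges gives exactly $L^{\pm}(\Gamma) = BB^{T}$. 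This is the signed analogue of Mallik's oriented incidence matrix, and the factor $\sqrt{-1}$ is what accounts for the indefiniteness of $L^{\pm}(\Gamma)$.

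Next I would recall the expansion $\det(xI - M) = \sum_{j=0}^{n} (-1)^{j} e_{j}(M)\, x^{\,n-j}$, where $e_{j}(M)$ denotes the sum of all $j \times j$ principal minors of $M$. Setting $j = n-k$ yields $c_k = (-1)^{n-k} e_{n-k}(L^{\pm}(\Gamma))$, so the whole theorem reduces to the identity
\begin{equation*}
e_{n-k}\!\left(L^{\pm}(\Gamma)\right) = \sum_{F^{k}(\Gamma)\in\mathcal{F}^{k}(\Gamma)} a\!\left(F^{k}(\Gamma)\right)\sigma\!\left(F^{k}(\Gamma)\right).
\end{equation*}
For a vertex set $S$ with $|S| = n-k$ the corresponding principal minor is $\det\big((BB^{T})[S]\big) = \det\big(B[S]\,B[S]^{T}\big)$, and Cauchy--Binet expands this as $\sum_{|T| = n-k} \det(B[S,T])^{2}$, the sum ranging over edge sets $T$ of size $n-k$, with $B[S,T]$ the square submatrix on rows $S$ and columns $T$.

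The combinatorial heart is to determine exactly when $\det(B[S,T]) \neq 0$. Since $B$ is, up to column scaling, an incidence matrix, its columns indexed by $T$ are linearly independent precisely when $T$ is acyclic; as $|T| = n-k$, such a $T$ is exactly the edge set of a spanning $k$-component forest $F^{k}(\Gamma)$ with components $C_1,\dots,C_k$. Partitioning the rows and columns of $B[S,T]$ by these components makes it block diagonal, so its determinant vanishes unless $S$ meets each $C_i$ in exactly $|C_i|-1$ vertices, i.e. unless $V(\Gamma)\setminus S$ selects precisely one vertex from each component. Each block is then the reduced incidence matrix of a tree scaled by the weights $w_e$, whose determinant is $\pm\prod_{e\in T_i} w_e$; hence $\det(B[S,T])^{2} = \prod_{e\in T} w_e^{2} = \prod_{e\in T}\sigma(e) = \sigma(F^{k}(\Gamma))$. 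The number of admissible $S$ for a fixed forest is the number of ways to delete one vertex per component, namely $|C_1|\cdots|C_k| = n_1\cdots n_k = a(F^{k}(\Gamma))$.

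Collecting these contributions, each spanning $k$-component forest $F^{k}(\Gamma)$ is counted $a(F^{k}(\Gamma))$ times with value $\sigma(F^{k}(\Gamma))$, which establishes the displayed identity and hence the formula for $c_k$. I expect the main obstacle to be the bookkeeping in the combinatorial step: verifying the block-diagonal vanishing criterion so that only one reference vertex per component survives, and tracking the weights $w_e$ through the Cauchy--Binet square so that the imaginary factors from negative edges reassemble precisely into the forest sign $\sigma(F^{k}(\Gamma))$ rather than producing spurious signs. Alternatively, once $L^{\pm}(\Gamma) = BB^{T}$ is in hand, the identity is an instance of the all-minors matrix tree theorem of Chaiken~\cite{chaiken1982com} and of Buslov's Theorem~2~\cite{buslove2016coefficients}, so the computation can also be read off directly from those references.
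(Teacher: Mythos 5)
Your proposal is correct, but it takes a genuinely different route from the paper: for this theorem the paper offers no argument at all, simply observing that the statement follows by specializing the all minors matrix tree theorem of Chaiken \cite{chaiken1982com}, or Theorem 2 of Buslov \cite{buslove2016coefficients}, for Laplacians of weighted digraphs to weights $\pm 1$ --- your closing sentence is exactly the paper's entire ``proof.'' What you give instead is a self-contained derivation: the factorization $L^{\pm}(\Gamma)=BB^{\top}$ through the $\sqrt{-1}$-weighted oriented incidence matrix (precisely Mallik's matrix \cite{mallik2022matrix}, which the paper cites only in connection with the matrix tree theorem), the expansion of $c_k$ into sums of principal minors, Cauchy--Binet (correctly applied with the plain transpose over $\mathbb{C}$ rather than the conjugate transpose, which is what allows the terms $\det(B[S,T])^{2}$ to be negative), and the combinatorial analysis showing that only pairs $(S,T)$ with $T$ a spanning $k$-component forest and $S$ omitting exactly one vertex per component survive, each forest contributing $n_1\cdots n_k$ times the value $\prod_{e\in T}w_e^{2}=\sigma(F^{k}(\Gamma))$. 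All of these steps check out, including the two delicate points you flag yourself: the block-diagonal structure forces every diagonal block to be square or the minor vanishes, and the unimodularity of a tree's reduced incidence matrix makes the ambient $\pm$ sign disappear upon squaring, leaving exactly the forest sign. What your route buys is an elementary, transparent proof that explains \emph{why} $\sigma(F^{k}(\Gamma))$ appears (as squares of the imaginary edge weights); what the paper's route buys is brevity and the greater generality of the cited results (arbitrary weights, all minors rather than only principal ones), at the cost of treating the theorem as a black box.
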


 To ease language, in the rest of this paper, we abbreviate the spectrum, the nullity and the rank of $L^{\pm}(\Gamma)$ as the spectrum, the nullity and the rank of $\Gamma$ and denote them by $Spec(\Gamma)$, $\eta(\Gamma)$ and $r(\Gamma)$, respectively. Obviously, $r(\Gamma) + \eta(\Gamma) = n$ if the order of $\Gamma$ is $n$. Note that $L^{\pm}(\Gamma)$ is a symmetric matrix and the sum of each row in it is equal to $0$. Thus, $\eta(\Gamma) \geq 1$ holds for any signed graph $\Gamma$ and $\eta(\Gamma) = 1$ if and only if $c_1 \neq 0$, where the coefficient $c_1$ of the linear term of $P_{L^{\pm}(\Gamma)}(x)$ is $$(-1)^{n-1}(\sum_{F^1(\Gamma) \in \mathcal{F}^1(\Gamma)} a(F^{1}(\Gamma))\sigma(F^1(\Gamma))) = (-1)^{n-1} n (\sum_{F^1(\Gamma) \in \mathcal{F}^1(\Gamma)} \sigma(F^1(\Gamma))) .$$
 Note that $\eta(\Gamma) = \eta(-\Gamma)$, where $-\Gamma$ is obtained by reversing the sign of each edge in $\Gamma$. A signed cactus graph $\Gamma$ is a signed graph whose underlying graph is a cactus graph. Recall that a cactus graph is a connected graph in which any two cycles have no edge in common. Equivalently, it is a connected graph in which any two cycles have most one vertex in common. The main result is as follows.
\begin{theorem}\label{tho:nullupp}
 Let $\Gamma$ be a connected signed graph of order $n$ $( n \geq 2)$ with cyclomatic number $\beta(\Gamma)$. \\
  \indent $(i)$ $1 \leq  \eta (\Gamma) \leq min\{ \beta(\Gamma) + 1, n - 1 \}$,\\
  \indent $(ii)$ $\eta(\Gamma) = n - 1$ if and only if $n$ is an even number and $\Gamma = K_{\frac{n}{2}} \triangledown^{-} K_{\frac{n}{2}}$ or $-(K_{\frac{n}{2}} \triangledown^{-} K_{\frac{n}{2}})$,\\
   where $K_{\frac{n}{2}}$ is the $\frac{n}{2}$-vertices signed complete graph with all positive edges and $K_{\frac{n}{2}} \triangledown^{-} K_{\frac{n}{2}}$ is obtained by adding all possible negative edges between vertices of one $K_{\frac{n}{2}}$ and vertices of another one.\\
   \indent Moreover, if $\Gamma$ is a signed cactus graph, then\\
  \indent $(iii)$ $\eta (\Gamma) = 1$ if and only if $m^+(C) \neq m^-(C)$ for any cycle $C$ of $\Gamma$, \\
  \indent $(iv)$ $\eta (\Gamma) = \beta(\Gamma) + 1$ if and only if $m^+(C) = m^-(C)$  for any cycle $C$ of $\Gamma$,\\
  where $m^+(C)$ and $m^-(C)$ are the numbers of the positive and negative edges of $C$, respectively.
\end{theorem}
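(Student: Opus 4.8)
The plan is to realise $L^{\pm}(\Gamma)$ through an incidence factorisation and reduce every part of the theorem to a single dimension count. Fix an arbitrary orientation of $G$, let $B$ be the corresponding $|V(\Gamma)|\times|E(\Gamma)|$ vertex–edge incidence matrix (column $b_{e}=\mathbf e_{u}-\mathbf e_{v}$ for an edge $e$ oriented from $u$ to $v$), and let $S=\mathrm{diag}(\sigma(e))_{e\in E(\Gamma)}$. A direct entrywise check gives the factorisation $L^{\pm}(\Gamma)=BSB^{\top}$, since an edge $e=uv$ contributes exactly $\sigma(e)\,b_{e}b_{e}^{\top}$ to $D^{\pm}(\Gamma)-A(\Gamma)$. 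Writing $W=\ker B$ for the cycle space (of dimension $\beta(\Gamma)$ when $\Gamma$ is connected) and $U=\operatorname{im}B^{\top}=W^{\perp}$ for the cut space (of dimension $n-1$), the identity $S^{2}=I$ turns $BSB^{\top}v=0$ into $B^{\top}v\in U\cap SW$; since $v\mapsto B^{\top}v$ maps $\mathbb R^{n}$ onto $U$ with kernel the constant vectors, this yields the key formula
\[
  \eta(\Gamma)=1+\dim\bigl(U\cap SW\bigr).
\]

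From here part $(i)$ is immediate: the lower bound $\eta(\Gamma)\ge 1$ is the constant-vector term (equivalently the zero row sums), while $\dim(U\cap SW)\le\dim SW=\dim W=\beta(\Gamma)$ gives $\eta(\Gamma)\le\beta(\Gamma)+1$; the bound $\eta(\Gamma)\le n-1$ follows because a connected graph on $n\ge 2$ vertices has an edge, so $L^{\pm}(\Gamma)\ne 0$ and $r(\Gamma)\ge 1$. For the cactus statements $(iii)$–$(iv)$ I would exploit that in a cactus the cycles $C_{1},\dots,C_{\beta}$ are pairwise edge-disjoint, so their oriented indicator vectors $w_{1},\dots,w_{\beta}$ form a basis of $W$ with pairwise disjoint supports, and the same holds for $Sw_{1},\dots,Sw_{\beta}$. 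Orthogonality of $Sw_{j}$ to every $w_{i}$ then reduces, by disjointness, to the single condition $\langle Sw_{j},w_{j}\rangle=\sum_{e\in C_{j}}\sigma(e)=m^{+}(C_{j})-m^{-}(C_{j})=0$; testing a general combination $\sum_{j}a_{j}Sw_{j}$ against each $w_{i}$ shows $U\cap SW=\operatorname{span}\{Sw_{j}:m^{+}(C_{j})=m^{-}(C_{j})\}$. Hence $\eta(\Gamma)=1+\#\{j:m^{+}(C_{j})=m^{-}(C_{j})\}$, from which $(iii)$ (value $1$, no balanced cycle) and $(iv)$ (value $\beta(\Gamma)+1$, all cycles balanced) drop out at once. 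As a cross-check aligned with Theorem \ref{tho:cof}, the spanning trees of a cactus are obtained by deleting one edge from each cycle, giving $c_{1}\propto\sigma(\Gamma)\prod_{j}(m^{+}(C_{j})-m^{-}(C_{j}))$, which recovers $(iii)$ through the criterion $\eta(\Gamma)=1\iff c_{1}\ne 0$.

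Part $(ii)$ asks for $\eta(\Gamma)=n-1$, i.e.\ $r(\Gamma)=1$, so $L^{\pm}(\Gamma)=\lambda zz^{\top}$ with $\lambda\ne 0$ and $z\perp\mathbf 1$ (as $\mathbf 1$ always lies in the kernel). I would argue in three moves. First, connectedness forces every coordinate $z_{u}\ne 0$ (a zero coordinate would isolate a vertex), and then the vanishing of all non-adjacent off-diagonal entries forces $G=K_{n}$. Second, each edge contributes $|\lambda z_{u}z_{v}|=1$, so $|z_{u}||z_{v}|$ is constant over all pairs; for $n\ge 3$ this forces all $|z_{u}|$ equal, and $\sum_{u}z_{u}=0$ then splits the vertices into the classes $P=\{u:z_{u}>0\}$ and $N=\{u:z_{u}<0\}$ of equal size, whence $n$ is even and $|P|=|N|=n/2$ (the case $n=2$ is checked directly). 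Third, reading off $\sigma(uv)=-\lambda z_{u}z_{v}$ shows edges inside a class and edges across classes carry opposite signs, which is exactly $K_{\frac n2}\triangledown^{-}K_{\frac n2}$ when $\lambda<0$ and its negation when $\lambda>0$; conversely the explicit vector $z=(\mathbf 1,-\mathbf 1)$ verifies $L^{\pm}=-zz^{\top}$ for $K_{\frac n2}\triangledown^{-}K_{\frac n2}$, so these graphs indeed attain $\eta(\Gamma)=n-1$.

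The step I expect to carry the most weight is the passage from basis vectors to a general element of $SW$ in the cactus argument: the clean equality $\dim(U\cap SW)=\#\{j:m^{+}(C_{j})=m^{-}(C_{j})\}$ depends essentially on the cycles being edge-disjoint, and it is precisely this feature that fails for arbitrary signed graphs and explains why $(iii)$–$(iv)$ are stated only for cacti. A secondary point requiring care is the rigidity argument in $(ii)$, where one must rule out unequal $|z_{u}|$ and dispose of the small case $n=2$ before concluding the balanced bipartition.
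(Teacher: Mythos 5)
Your proof is correct, and it takes a genuinely different route from the paper's. The paper obtains $(i)$ by deleting $\beta(\Gamma)$ edges to reach a spanning tree and applying edge interlacing (Lemma \ref{lem:interlacing} and Corollary \ref{col:nulltiy}, derived from Courant--Weyl), proves $(ii)$ by an entrywise analysis of the rank-one matrix $L^{\pm}(\Gamma)$, and establishes $(iii)$--$(iv)$ by induction on the cyclomatic number, combining the coalescence identity of Lemma \ref{lem:ident}, the cut-edge bound of Lemma \ref{lem:cut_edge}, and the matrix-tree expression for the coefficient $c_1$ from Theorem \ref{tho:cof}. You instead factor $L^{\pm}(\Gamma)=BSB^{\top}$ (an incidence factorization in the spirit of Mallik's matrix mentioned in the introduction) and compress $(i)$, $(iii)$, $(iv)$ into the single identity $\eta(\Gamma)=1+\dim(U\cap SW)$; for cacti, edge-disjointness of the cycles gives disjointly supported bases of $W$ and $SW$, and your computation of the intersection yields the closed formula $\eta(\Gamma)=1+\#\{j:\,m^{+}(C_j)=m^{-}(C_j)\}$. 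This is strictly stronger than $(iii)$ and $(iv)$ together: it determines the nullity of every signed cactus, intermediate values included, with no induction and no case analysis on cut edges, and it bears directly on the problem raised in the paper's concluding remarks about nullities strictly between $1$ and $\beta(\Gamma)+1$. What the paper's route buys instead is a toolkit (edge interlacing, cut-vertex and cut-edge lemmas) valid for arbitrary signed graphs and reusable beyond cacti. For $(ii)$ the two arguments are essentially the same rigidity analysis of a rank-one $L^{\pm}(\Gamma)$ --- connectivity forces $G=K_n$, then the zero row sums force the balanced bipartition --- with yours phrased through the spectral form $\lambda zz^{\top}$ and the paper's through direct inspection of the matrix entries; both correctly handle the sign dichotomy between $\Gamma$ and $-\Gamma$, and your explicit treatment of the case $n=2$ and of the verification that $K_{\frac{n}{2}} \triangledown^{-} K_{\frac{n}{2}}$ attains nullity $n-1$ fills in details the paper leaves as ``obvious.''
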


Note that the inequalities $1 \leq  \eta (\Gamma) \leq  \beta(\Gamma) + 1$ are also established by Ge and Liu in \cite[Theorem 6.17]{ge2022sym}. In this paper, we give a short proof of these inequalities. As a by-product of the study  of the spectra of signed complete graphs, Ou, Hou, and Xiong  proved in \cite[Corollay 2.9]{ou2021net} that, for an $n$-vertices signed complete graph $(K_n,\sigma)$, $\eta((K_n,\sigma)) = n-1$ if and only if $(K_n,\sigma)$ is $K_{\frac{n}{2}} \triangledown^{-} K_{\frac{n}{2}}$ or $-(K_{\frac{n}{2}} \triangledown^{-} K_{\frac{n}{2}})$. Here without computing the spectra we extend this result from signed complete graphs to connected signed graphs.\\
\indent The remainder of this paper is organized as follows. Some lemmas are introduced in Section \ref{sec:pre}. In Section \ref{sec:proof} we give the proof of Theorem \ref{tho:nullupp}.

\section{Preliminaries}\label{sec:pre}
We first recall some notations not defined in Section \ref{sec:into}.
Let $\Gamma$ be a signed graph with vertex set $V(\Gamma)$ and edge set $E(\Gamma)$.
A subgraph $H$ of $\Gamma$ is a signed graph such that $V(H) \subseteq V(\Gamma)$, $E(H) \subseteq E(\Gamma)$ and the edge set $E(H)$ preserving the signs in $\Gamma$. Further, $H$ is called an induced subgraph of $\Gamma$ if for $\forall$ $u,v \in V(H)$, $u,v$ are adjacent in $H$ if and only if they are adjacent in $\Gamma$.
The sign of a subgraph $H$ of $\Gamma$ is defined as $\sigma(H) = \prod_{e \in E(H)}\sigma(e)$ and the numbers of positive and negative edges of $H$ are denoted by $m^+(H)$ and $m^-(H)$, respectively.
If $V_1 \subseteq V(\Gamma)$, we denote by $\Gamma[V_1]$ the induced subgraph of $\Gamma$ with vertex set $V_1$, and  denote by $\Gamma - V_1$ the induced subgraph of $\Gamma$ with vertex set $V(\Gamma) \setminus V_1$, i.e., $\Gamma - V_1 = \Gamma[V(\Gamma) \setminus V_1]$. We simplify $\Gamma-V_1$ as $\Gamma-v$ when $V_1 = \{ v\}$.
For an induced subgraph $H$ of $\Gamma$ and a vertex subset $V_1 \subset V(\Gamma)$ outside $H$, denote by $H + V_1$ the induced subgraph of $\Gamma$ with vertex set $V(H) \cup V_1$. Sometimes we use the notation $\Gamma- H$ instead of $\Gamma-V(H)$ if $H$ is an induced subgraph of $\Gamma$.
For an edge subset $E_1 \subseteq E(\Gamma)$, we denote by $\Gamma - E_1$ the signed graph with the same vertex set as $\Gamma$ and with edge set $E(\Gamma) \setminus E_1$. We also abbreviate $\Gamma - E_1$ as $\Gamma - e$ when $E_1 = \{ e\}$.  An edge $e$ (resp., a vertex $v$) is called a cut edge (resp., a cut vertex) if $\Gamma - {e}$ (resp., $\Gamma - v$) has more connected components than $\Gamma$. A signed graph $\Gamma$ with a cut vertex $w$ can be regard as a coalescence $\Gamma_1 \cdot \Gamma_2$ of two signed graphs $\Gamma_1$ and $\Gamma_2$, obtained from $\Gamma_1 \: \dot \cup \: \Gamma_2$ by identifying a vertex $u$ of $\Gamma_1$ with a vertex $v$ of $\Gamma_2$. (Formally, $V(\Gamma_1  \cdot  \Gamma_2) = V(\Gamma_1-u) \: \dot \cup \: V(\Gamma_2-v) \: \dot \cup \: \{ w\}$ with two vertices in $\Gamma_1 \cdot \Gamma_2$ adjacent if they are adjacent in $\Gamma_1$ or $\Gamma_2$, or if one is $w$ and the other is a neighbour of $u$ in $\Gamma_1$ or a neighbour of $v$ in $\Gamma_2$). The cyclomatic number of a connected signed graph $\Gamma$, denoted by $\beta(\Gamma)$, is defined as $\beta(\Gamma) = |E(\Gamma)| - |V(\Gamma)| + 1$. A connected signed graph $\Gamma$ is called a  signed tree (resp., a signed unicyclic graph; a signed bicyclic graph) if $\beta(\Gamma) = 0$ (resp., $\beta(\Gamma) = 1; \beta(\Gamma) = 2)$.\\
  \indent Next we present some preliminary results which are useful in the sequel. The following lemma is clear.

 \begin{lemma}\label{lem:empty} Let $\Gamma$ be a signed graph of order $n$.\\
  \indent $(1)$ If $\Gamma$ = $\Gamma_{1} \; \dot \cup \cdots \dot \cup \; \Gamma_{t}$, where $\Gamma_{1}, \cdots ,\Gamma_{t}$ are all the connected components of $\Gamma$, then $\eta(\Gamma) = \sum _{i = 1}^{t} \eta(\Gamma_{i})$.\\
  \indent $(2)$ $\eta(\Gamma) = n$ if and only if $\Gamma$ has no edges.
  \end{lemma}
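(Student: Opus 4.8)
The plan is to read both statements directly off the structure of the matrix $L^{\pm}(\Gamma) = D^{\pm}(\Gamma) - A(\Gamma)$, exploiting that its off-diagonal pattern records exactly the signed edges of $\Gamma$ while its diagonal is determined by the net-degrees. No deep machinery is needed; Theorem \ref{tho:cof} is not required here.

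For part $(1)$, I would first observe that every vertex of a component $\Gamma_i$ has all of its neighbours inside $\Gamma_i$, so its net-degree in $\Gamma$ coincides with its net-degree in $\Gamma_i$. Hence, after ordering the vertices component by component, $D^{\pm}(\Gamma)$ is the direct sum $D^{\pm}(\Gamma_1) \oplus \cdots \oplus D^{\pm}(\Gamma_t)$. Since no edge joins distinct components, $A(\Gamma)$ is block diagonal with diagonal blocks $A(\Gamma_i)$, and therefore
\[
L^{\pm}(\Gamma) = L^{\pm}(\Gamma_1) \oplus \cdots \oplus L^{\pm}(\Gamma_t).
\]
The nullity of a block-diagonal matrix equals the sum of the nullities of its diagonal blocks, because a kernel vector of the whole matrix is precisely a concatenation of kernel vectors of the blocks. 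This yields $\eta(\Gamma) = \sum_{i=1}^{t} \eta(\Gamma_i)$.

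For part $(2)$, I would use that for the symmetric $n \times n$ matrix $L^{\pm}(\Gamma)$ one has $\eta(\Gamma) = n$ if and only if $r(\Gamma) = 0$, i.e.\ if and only if $L^{\pm}(\Gamma)$ is the zero matrix. Now the $(u,v)$ entry of $L^{\pm}(\Gamma)$ with $u \neq v$ equals $-\sigma(uv)$ when $uv \in E(\Gamma)$ and $0$ otherwise; since $\sigma(uv) \in \{+1,-1\}$ is never zero, each edge contributes a nonzero off-diagonal entry. Consequently $L^{\pm}(\Gamma) = 0$ forces $E(\Gamma) = \emptyset$. Conversely, if $\Gamma$ has no edges then $A(\Gamma) = 0$ and, all net-degrees being zero, $D^{\pm}(\Gamma) = 0$, so $L^{\pm}(\Gamma) = 0$. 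This proves that $\eta(\Gamma) = n$ if and only if $\Gamma$ has no edges.

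Neither part presents a genuine obstacle; the only point requiring a line of care is the bookkeeping in part $(1)$, namely checking that a vertex's net-degree is insensitive to the decomposition into components, so that $D^{\pm}(\Gamma)$ is indeed block diagonal. Once that is noted, the direct-sum decomposition of $L^{\pm}(\Gamma)$ and the additivity of nullity over direct summands are routine, which is why the lemma may fairly be called clear.
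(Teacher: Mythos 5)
Your proof is correct and is exactly the routine argument the paper has in mind: the paper states this lemma without proof (``The following lemma is clear''), and your block-diagonal decomposition $L^{\pm}(\Gamma) = L^{\pm}(\Gamma_1) \oplus \cdots \oplus L^{\pm}(\Gamma_t)$ for $(1)$ and the observation that $\eta(\Gamma)=n$ forces $L^{\pm}(\Gamma)=0$, hence $E(\Gamma)=\emptyset$, for $(2)$ are the standard justifications. Nothing is missing; your one point of care --- that net-degrees are computed within components, so $D^{\pm}(\Gamma)$ is itself block diagonal --- is precisely the detail that makes the lemma immediate.
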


  Next we will introduce an analog for Interlacing Theorem (c.f. \cite[Theorem 2.1]{hammers1995interlacing}) of the net Laplacian matrix of a signed graph in respect of edges. For this reason we need the lemma below, which is known as Courant-Weyl inequalities.
\begin{lemma}\cite[Theorem 1.3.15]{cvetkovic2010introduction}\label{lem:Coutant_Wel}
Let $A$ and $B$ be $n \times n$ Hermitian matrices. Then
\begin{gather*}
\lambda_{i}(A + B) \leq \lambda_{j}(A) + \lambda_{i-j+1}(B) \qquad (1 \leq j \leq i \leq n), \\
\lambda_{i}(A + B) \geq \lambda_{j}(A) + \lambda_{i-j+n}(B) \qquad (1 \leq i \leq j \leq n).
\end{gather*}
\end{lemma}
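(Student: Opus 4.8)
The plan is to derive both inequalities from the Courant--Fischer min--max characterization of the eigenvalues of a Hermitian matrix. Writing $R_M(x) = \frac{x^{*}Mx}{x^{*}x}$ for the Rayleigh quotient of a Hermitian matrix $M$, and recalling that the eigenvalues are indexed in non-increasing order, the two dual forms I would invoke are
\begin{gather*}
\lambda_i(M) = \min_{\dim S = n-i+1} \ \max_{0 \neq x \in S} R_M(x), \\
\lambda_i(M) = \max_{\dim S = i} \ \min_{0 \neq x \in S} R_M(x),
\end{gather*}
where $S$ ranges over the subspaces of $\mathbb{C}^n$ of the indicated dimension. The one structural fact driving everything is the additivity of the Rayleigh quotient, $R_{A+B}(x) = R_A(x) + R_B(x)$ for every nonzero $x$; so restricting $x$ to a subspace on which both $R_A$ and $R_B$ are simultaneously controlled immediately controls $R_{A+B}$.

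To prove the first inequality, fix $1 \leq j \leq i \leq n$. I would let $S_A$ be the span of the eigenvectors of $A$ associated with $\lambda_j(A), \lambda_{j+1}(A), \ldots, \lambda_n(A)$, so that $\dim S_A = n-j+1$ and $R_A(x) \leq \lambda_j(A)$ for every nonzero $x \in S_A$; likewise let $S_B$ be the span of the eigenvectors of $B$ associated with $\lambda_{i-j+1}(B), \ldots, \lambda_n(B)$, so that $\dim S_B = n-i+j$ and $R_B(x) \leq \lambda_{i-j+1}(B)$ on $S_B$. The dimension formula then gives $\dim(S_A \cap S_B) \geq \dim S_A + \dim S_B - n = n-i+1$, so $S_A \cap S_B$ contains a subspace $S$ of dimension exactly $n-i+1$. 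On each nonzero $x \in S$, additivity yields $R_{A+B}(x) = R_A(x) + R_B(x) \leq \lambda_j(A) + \lambda_{i-j+1}(B)$, whence $\max_{0 \neq x \in S} R_{A+B}(x) \leq \lambda_j(A) + \lambda_{i-j+1}(B)$. Substituting this particular $S$ into the min--max formula for $\lambda_i(A+B)$ gives $\lambda_i(A+B) \leq \lambda_j(A) + \lambda_{i-j+1}(B)$.

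For the second inequality I would dualize rather than rerun the argument. Applying the first inequality to the Hermitian matrices $-A$ and $-B$ and using $\lambda_k(-M) = -\lambda_{n-k+1}(M)$ (negation reverses the ordering), the reindexing $i \mapsto n-i+1$, $j \mapsto n-j+1$ turns the upper bound for $-A-B$ into the claimed lower bound $\lambda_i(A+B) \geq \lambda_j(A) + \lambda_{i-j+n}(B)$, while the admissible range $1 \leq j \leq i \leq n$ transforms precisely into $1 \leq i \leq j \leq n$. (Equivalently, the same intersection-of-eigenspaces construction fed through the max--min form of Courant--Fischer produces it directly.) The step requiring genuine care—and the real crux—is the bookkeeping of the two index windows: one must pick the eigenvector ranges for $A$ and for $B$ so that the guaranteed intersection dimension $\dim S_A + \dim S_B - n$ comes out to exactly $n-i+1$ and stays positive, and it is this constraint that forces the specific index combination $j$ together with $i-j+1$ (respectively $i-j+n$) appearing in the statement.
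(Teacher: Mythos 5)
Your proof is correct: the subspace dimensions check out ($\dim S_A + \dim S_B - n = (n-j+1)+(n-i+j)-n = n-i+1 \geq 1$ precisely when $j \leq i$), the additivity of the Rayleigh quotient combined with the min--max form of Courant--Fischer gives the first inequality, and your dualization via $\lambda_k(-M) = -\lambda_{n-k+1}(M)$ with the substitution $i \mapsto n-i+1$, $j \mapsto n-j+1$ correctly produces the second inequality together with its index range $1 \leq i \leq j \leq n$. Note that the paper itself offers no proof of this lemma---it is quoted from Theorem 1.3.15 of Cvetkovi\'{c}, Rowlinson, and Simi\'{c}---so there is nothing to compare against; your argument is the standard textbook proof of the Courant--Weyl inequalities and can stand in for the citation.
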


   Note that we cannot invoke an analog for Interlacing Theorem of the net Laplacian matrix when we delete vertices, because a principal submatrix of $L^{\pm}$ is not the net Laplacian matrix of the corresponding induced subgraph. However we do have an analog for Interlacing Theorem when we delete an edge:
   \begin{lemma}\label{lem:interlacing}
    Let $\Gamma$ be a signed graph of order $n$. If $e = uv$ is an edge of $\Gamma$ and $H = \Gamma - e$, then
   \begin{gather*}
\lambda_{1}(\Gamma) \geq \lambda_{1}(H) \geq \cdots \geq \lambda_{n}(\Gamma) \geq \lambda_{n}(H), \qquad if \; \sigma(e) = +1. \\
\lambda_{1}(H) \geq \lambda_{1}(\Gamma) \geq \cdots \geq \lambda_{n}(H) \geq \lambda_{n}(\Gamma), \qquad if \; \sigma(e) = -1.
   \end{gather*}
   \end{lemma}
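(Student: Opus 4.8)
The plan is to realize $L^{\pm}(\Gamma)$ as a rank-one perturbation of $L^{\pm}(H)$ and then feed this into the Courant--Weyl inequalities of Lemma~\ref{lem:Coutant_Wel}. First I would write down explicitly how the net Laplacian changes when the edge $e=uv$ is deleted. Since $L^{\pm} = D^{\pm} - A$, only the two diagonal entries indexed by $u,v$ and the two off-diagonal entries $(u,v),(v,u)$ are affected. Deleting a positive edge lowers each of $d^{\pm}(u)$ and $d^{\pm}(v)$ by one and zeroes out the $\pm 1$ adjacency entries; tracking both effects shows that $L^{\pm}(\Gamma)-L^{\pm}(H)$ is supported on the $\{u,v\}$ block and equals $\mathbf{w}\mathbf{w}^{\top}$, where $\mathbf{w}$ is the vector with $+1$ in coordinate $u$, $-1$ in coordinate $v$, and $0$ elsewhere.

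The decisive point is that $\mathbf{w}\mathbf{w}^{\top}$ is positive semidefinite of rank one, with eigenvalues $2$ and $0^{(n-1)}$. So when $\sigma(e)=+1$ we have $L^{\pm}(\Gamma)=L^{\pm}(H)+\mathbf{w}\mathbf{w}^{\top}$. Repeating the computation for a negative edge, the diagonal and off-diagonal contributions reverse sign, giving $L^{\pm}(\Gamma)-L^{\pm}(H)=-\mathbf{w}\mathbf{w}^{\top}$, equivalently $L^{\pm}(H)=L^{\pm}(\Gamma)+\mathbf{w}\mathbf{w}^{\top}$. Thus in both cases one matrix is obtained from the other by adding a single copy of the same rank-one positive semidefinite matrix; the only difference between the two cases is which of $\Gamma$, $H$ plays the role of the larger matrix, and this is exactly what produces the two different orderings in the statement.

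It remains to convert this into the interlacing chain. Setting $A=L^{\pm}(H)$ and $B=\mathbf{w}\mathbf{w}^{\top}$ in Lemma~\ref{lem:Coutant_Wel}, the second inequality with $j=i$ gives $\lambda_i(A+B)\ge \lambda_i(A)+\lambda_n(B)=\lambda_i(A)$, while the first inequality applied at index $i+1$ with $j=i$ gives $\lambda_{i+1}(A+B)\le \lambda_i(A)+\lambda_2(B)=\lambda_i(A)$, since $\lambda_2(B)=\lambda_n(B)=0$. Together these yield $\lambda_i(A+B)\ge \lambda_i(A)\ge \lambda_{i+1}(A+B)$ for every $i$, which is the desired interlacing; substituting the appropriate roles of $\Gamma$ and $H$ for the two signs gives the two displayed chains. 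The proof is essentially bookkeeping, and the only place that requires care is the sign tracking in the first step: one must not forget that deleting a negative edge raises (rather than lowers) the net-degrees at its endpoints, which is precisely what flips the perturbation from $+\mathbf{w}\mathbf{w}^{\top}$ to $-\mathbf{w}\mathbf{w}^{\top}$ and hence reverses the interlacing order.
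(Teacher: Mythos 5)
Your proof is correct and is essentially the paper's own argument: the paper also writes $L^{\pm}(\Gamma)=L^{\pm}(H)+Q$ where $Q$ is supported on the $\{u,v\}$ block (your $Q=\sigma(e)\mathbf{w}\mathbf{w}^{\top}$), notes that $Q$ has spectrum $\{2,0^{n-1}\}$ or $\{0^{n-1},-2\}$ according to the sign of $e$, and then invokes the Courant--Weyl inequalities of Lemma~\ref{lem:Coutant_Wel}. Your writeup is in fact slightly more explicit than the paper's, since you spell out the index choices in Courant--Weyl and the role swap of $\Gamma$ and $H$ in the negative-edge case, which the paper leaves to the reader.
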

    \begin{proof}
   We can write $L^{\pm}(\Gamma)$ as $ L^{\pm}(H) + Q$, where
\[Q = \bordermatrix{
  &   & u & v &  & \cr
  & \textbf{0} & \textbf{0} & \textbf{0} & \textbf{0} \cr
u & \textbf{0}^{\top} & \sigma(e) & -\sigma(e) & \textbf{0}^{\top} \cr
v & \textbf{0}^{\top} & -\sigma(e) & \sigma(e) & \textbf{0}^{\top} \cr
  & \textbf{0} & \textbf{0} & \textbf{0} & \textbf{0} \cr
}.
\]
Note that the spectrum of $Q$ is $\{2^{1}, 0^{n-1} \}$ if $\sigma(e) = +1$, and is $\{0^{n-1}, (-2)^{1} \}$ if $\sigma(e) = -1$. Then by Lemma \ref{lem:Coutant_Wel}, we obtain the result.
    \end{proof}

    We end this section with a direct consequence of Lemma \ref{lem:interlacing}.
    \begin{corollary}\label{col:nulltiy} Let $\Gamma$ be a signed graph. If we delete an edge $e$ of $\Gamma$, then $\eta(\Gamma)-1 \leq \eta(\Gamma - e) \leq \eta(\Gamma) + 1$.
    \end{corollary}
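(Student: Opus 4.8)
The plan is to read the bound off directly from the interlacing inequalities of Lemma \ref{lem:interlacing}, exploiting that $\eta(\Gamma)$ and $\eta(\Gamma-e)$ are nothing but the multiplicities of the eigenvalue $0$ in the two spectra. The guiding principle is that an interlacing of the shape $\lambda_i(\Gamma) \geq \lambda_i(H) \geq \lambda_{i+1}(\Gamma)$ can change the multiplicity of any fixed value (here $0$) by at most one; this ultimately reflects that $L^{\pm}(\Gamma)$ and $L^{\pm}(H)$ differ by the rank-one matrix $Q$ appearing in the proof of Lemma \ref{lem:interlacing}. Write $H = \Gamma - e$ and first treat $\sigma(e) = +1$, where the string reads $\lambda_1(\Gamma) \geq \lambda_1(H) \geq \cdots \geq \lambda_n(\Gamma) \geq \lambda_n(H)$.

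For the lower bound $\eta(H) \geq \eta(\Gamma) - 1$, set $m = \eta(\Gamma)$ and let $\lambda_p(\Gamma) = \cdots = \lambda_{p+m-1}(\Gamma) = 0$ be the block of zero eigenvalues of $\Gamma$. For each index $i$ with $p \leq i \leq p+m-2$ the interlacing gives $0 = \lambda_{i+1}(\Gamma) \leq \lambda_i(H) \leq \lambda_i(\Gamma) = 0$, so $\lambda_i(H) = 0$. This exhibits $m-1$ zero eigenvalues of $H$, whence $\eta(H) \geq \eta(\Gamma) - 1$. For the upper bound $\eta(H) \leq \eta(\Gamma) + 1$, I would run the same squeeze with the roles reversed: let $\lambda_q(H) = \cdots = \lambda_{q+k-1}(H) = 0$ with $k = \eta(H)$, and rewrite the interlacing as $\lambda_{i-1}(H) \geq \lambda_i(\Gamma) \geq \lambda_i(H)$. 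For each $i$ with $q+1 \leq i \leq q+k-1$ both outer terms vanish, forcing $\lambda_i(\Gamma) = 0$; this produces $k-1$ zeros of $\Gamma$, i.e. $\eta(\Gamma) \geq \eta(H) - 1$. Combining the two inequalities yields $\eta(\Gamma) - 1 \leq \eta(H) \leq \eta(\Gamma) + 1$.

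The case $\sigma(e) = -1$ is handled identically after interchanging $\Gamma$ and $H$ in the interlacing string of Lemma \ref{lem:interlacing}; since the target conclusion $|\eta(\Gamma) - \eta(H)| \leq 1$ is symmetric in $\Gamma$ and $H$, no separate computation is needed. I do not anticipate a genuine obstacle, as the corollary is a routine consequence of the lemma. The only point requiring a little care is the bookkeeping at the ends of the zero block—checking that each interlacing inequality invoked has an admissible index (in particular $i \geq 2$ in the reversed form)—which is precisely why one recovers $m-1$ forced zeros rather than $m$.
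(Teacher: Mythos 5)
Your proof is correct and follows exactly the route the paper intends: the paper states this corollary as a direct consequence of Lemma \ref{lem:interlacing} without spelling out the details, and your squeeze argument (forcing $m-1$ zeros of $H$ between consecutive zeros of $\Gamma$, and symmetrically for the upper bound) is precisely the standard bookkeeping that justifies it, including the correct handling of the index range and the sign cases.
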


\section{ Proof of Theorem \ref{tho:nullupp}}\label{sec:proof}
 This section is devoted to the proof of Theorem \ref{tho:nullupp}.
 We need some helpful lemmas as preparations. First we study the nullities of signed trees and signed unicyclic graphs in following two lemmas.
 In fact, it has been proved in \cite[Theorem 2.2]{fieler1975eigenvector} that, for an $n$-vertices signed tree $T$, the numbers of positive, negative, and zero eigenvalues of $T$ are $m^+(T)$, $m^-(T)$, and $n-m^+(T)-m^-(T) = 1$, respectively.
 Here we also give a different proof for self-contained.

\begin{lemma}\label{lem:tree}
Let $T$ be a signed tree of order $n$. Then $\eta(T) = 1$.
\end{lemma}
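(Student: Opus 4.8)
The plan is to prove that a signed tree $T$ of order $n$ has nullity exactly $1$. Since $L^{\pm}(T)$ always has row sums equal to zero, the all-ones vector lies in the kernel, so $\eta(T) \geq 1$ is automatic and the content is the reverse inequality $\eta(T) \leq 1$, equivalently $r(L^{\pm}(T)) = n-1$. By the discussion preceding the lemma, this is equivalent to showing the coefficient $c_1$ of the characteristic polynomial is nonzero, which by Theorem \ref{tho:cof} amounts to proving that the signed sum $\sum_{F^1(T)} \sigma(F^1(T))$ over spanning trees (i.e.\ $1$-component spanning forests) of $T$ is nonzero.

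The key observation that makes this easy for a tree is that $T$ itself is its \emph{only} spanning tree, since a tree has no proper connected spanning subgraph on all $n$ vertices and removing any edge disconnects it. Hence the set $\mathcal{F}^1(T)$ is the singleton $\{T\}$, and the sum collapses to the single term $\sigma(T) = \prod_{e \in E(T)} \sigma(e) \in \{+1,-1\}$, which is certainly nonzero. Therefore $c_1 = (-1)^{n-1} n\,\sigma(T) \neq 0$, and by the stated criterion $\eta(T) = 1$.

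First I would invoke Theorem \ref{tho:cof} to express $c_1$ explicitly, then identify $\mathcal{F}^1(T) = \{T\}$ using the defining property of trees, and finally conclude $c_1 \neq 0$ and apply the equivalence ``$\eta(\Gamma) = 1$ iff $c_1 \neq 0$'' already established in Section \ref{sec:into}. An alternative self-contained route, avoiding Theorem \ref{tho:cof}, would be to induct on $n$: a tree with $n \geq 2$ has a pendant vertex $v$ with unique neighbour $u$, and one can use a cofactor/row-reduction argument on $L^{\pm}(T)$ along the pendant edge to relate $\det$ of an $(n-1)\times(n-1)$ principal-type minor of $T$ to that of $T-v$, showing the rank increases by exactly $1$ at each step. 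I expect neither route to present a genuine obstacle; the only point requiring a little care in the inductive approach is that principal submatrices of $L^{\pm}$ are \emph{not} net Laplacians of induced subgraphs (as warned before Lemma \ref{lem:interlacing}), so the induction must be set up on the determinant of a fixed principal minor rather than on the nullity of $T-v$ directly. For this reason I would favour the clean matrix-tree argument via Theorem \ref{tho:cof}.
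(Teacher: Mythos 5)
Your proposal is correct and is essentially identical to the paper's own proof: both invoke Theorem \ref{tho:cof}, note that $\mathcal{F}^1(T)=\{T\}$ because a tree is its own unique spanning tree, and conclude $c_1=(-1)^{n-1}n\,\sigma(T)\neq 0$, hence $\eta(T)=1$. The alternative inductive route you sketch is unnecessary; the route you favour is exactly the one the paper takes.
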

\begin{proof}
By Theorem \ref{tho:cof} and $\eta(T) \geq 1$, it is sufficient to prove  $c_{1} \neq 0$ in the case $\Gamma = T$. The result follows from $c_{1} = (-1)^{n-1} n \sum_{F^1(T) \in \mathcal{F}^1(T)} \sigma(F^1(T)) = (-1)^{n-1}n \cdot \sigma(T) \neq 0$.
\end{proof}

\begin{lemma}\label{lem:unicyc}
Let $U$ be a signed unicyclic graph of order $n$ with the unique signed cycle $C$. Then
\begin{equation*}
 \eta(U) =
  \left\{
    \begin{array}{cc}
     1, & if \: m^+(C) \neq m^-(C), \\
     2 , & otherwise.  \\
    \end{array}
  \right.
 \end{equation*}
\end{lemma}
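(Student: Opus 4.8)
The plan is to separate the two cases by examining the linear coefficient $c_1$ of the characteristic polynomial, exactly in the spirit of the proof of Lemma \ref{lem:tree}. Recall from the introduction that $\eta(\Gamma) = 1$ if and only if $c_1 \neq 0$, and that $c_1 = (-1)^{n-1} n \sum_{F^1(U) \in \mathcal{F}^1(U)} \sigma(F^1(U))$ by Theorem \ref{tho:cof}. The key structural observation is that, since $U$ is unicyclic, its spanning trees are precisely the graphs $U - e$ obtained by deleting a single edge $e$ of the unique cycle $C$. So if $C$ has length $g$ with edges $e_1, \ldots, e_g$, there are exactly $g$ spanning trees.

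First I would compute the relevant sum of signs. Writing $\sigma(U) = \prod_{e \in E(U)} \sigma(e)$, deleting one cycle edge gives $\sigma(U - e_i) = \sigma(U)\sigma(e_i)$ (using $\sigma(e_i)^{-1} = \sigma(e_i)$ for a $\pm 1$ value), so that
\[
\sum_{i=1}^{g} \sigma(U - e_i) = \sigma(U) \sum_{i=1}^{g} \sigma(e_i) = \sigma(U)\,\bigl(m^+(C) - m^-(C)\bigr).
\]
Hence $c_1 = (-1)^{n-1} n\, \sigma(U)\,(m^+(C) - m^-(C))$, and since $\sigma(U) = \pm 1$ and $n \geq 1$, we get $c_1 \neq 0$ if and only if $m^+(C) \neq m^-(C)$. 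This settles the first case: when $m^+(C) \neq m^-(C)$ we have $\eta(U) = 1$.

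For the remaining case $m^+(C) = m^-(C)$, the computation above gives $c_1 = 0$, whence $\eta(U) \geq 2$, and it remains only to prove the matching upper bound $\eta(U) \leq 2$. Rather than pushing on to compute $c_2$ (which would force a more delicate analysis of the spanning $2$-component forests, split into those obtained by removing two cycle edges and those obtained by removing one cycle edge and one bridge), I would obtain the bound cheaply from the edge version of interlacing. Deleting one edge $e$ of the cycle $C$ turns $U$ into a spanning tree $T = U - e$, which by Lemma \ref{lem:tree} satisfies $\eta(T) = 1$. Corollary \ref{col:nulltiy} then yields $\eta(U) \leq \eta(T) + 1 = 2$, and combined with $\eta(U) \geq 2$ this forces $\eta(U) = 2$.

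The only genuinely computational step is the sign identity $\sum_i \sigma(U - e_i) = \sigma(U)(m^+(C) - m^-(C))$, which is precisely where the comparison between $m^+(C)$ and $m^-(C)$ enters the picture; everything else is bookkeeping. The one point to watch is resisting the temptation to attack $\eta(U) \leq 2$ through $c_2$: the interlacing corollary makes the upper bound essentially free, and this shortcut is what keeps the whole argument short.
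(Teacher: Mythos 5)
Your proposal is correct and follows essentially the same route as the paper's own proof: both identify the spanning trees of $U$ as $U-e$ for $e \in E(C)$, compute $c_1$ via Theorem \ref{tho:cof} using the identity $\sum_{e\in E(C)}\sigma(U-e)=\sigma(U)\sum_{e\in E(C)}\sigma(e)$ to settle the case $m^+(C)\neq m^-(C)$, and in the balanced case combine $c_1=0$ (giving $\eta(U)\geq 2$) with Corollary \ref{col:nulltiy} and Lemma \ref{lem:tree} (giving $\eta(U)\leq 2$). Your explicit rewriting of the sign sum as $\sigma(U)\bigl(m^+(C)-m^-(C)\bigr)$ is merely a more detailed statement of what the paper leaves implicit.
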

\begin{proof}

As we have shown in the above, we need to prove that, in the case $\Gamma = U$, $c_1 \neq 0$ if $m^+(C) \neq m^-(C)$ and $ c_1 = 0$ otherwise. Since
\begin{equation*}
\sum_{F^1(U) \in \mathcal{F}^1(U)}\sigma(F^1(U)) = \sum_{e \in E(C)} \sigma(U - e)
      = \sigma(U) \sum_{e \in E(C)}\sigma(e),
\end{equation*}
where the first equality follows from that an $1$-component spanning forest (i.e. a spanning tree) of $U$ is obtained by deleting an edge of $C$, so we have $c_1 \neq 0$ and $\eta(U) = 1$ if $m^+(C) \neq m^-(C)$. If $m^+(C) = m^-(C)$, on the one hand, $c_1 = 0$ and so $\eta(U) \geq 2$. On the other hand, by Corollary \ref{col:nulltiy} and Lemma \ref{lem:tree} we have $\eta(U) \leq \eta(U - e) + 1 = 2 $, where $e\in E(C)$. This completes the proof.
\end{proof}

We next study how $\eta(\Gamma)$ changes when we delete a cut edge or a cut vertex from a signed graph.
\begin{lemma}\label{lem:cut_edge}
Let $\Gamma$ be a signed graph with a cut edge $e = uv$ and $\Gamma-e = \Gamma_1 \; \dot  \cup \; \Gamma_2$, where $\Gamma_1$ and $\Gamma_2$ are two induced subgraphs of $\Gamma-e$ containing $u$ and $v$, respectively. Then $\eta(\Gamma) \geq \eta(\Gamma_1) + \eta(\Gamma_2) - 1$.
\end{lemma}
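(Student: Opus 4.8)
The plan is to derive this inequality as an immediate combination of two facts already available: the additivity of the nullity over connected components (Lemma \ref{lem:empty}(1)) and the single-edge-deletion bound (Corollary \ref{col:nulltiy}). No new computation with the characteristic polynomial is needed; the whole point is that the cut-edge hypothesis converts $\eta(\Gamma-e)$ into a clean sum.

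First I would exploit the cut-edge hypothesis to compute $\eta(\Gamma - e)$. Since $e=uv$ is a cut edge with $\Gamma - e = \Gamma_1 \; \dot \cup \; \Gamma_2$, the matrix $L^{\pm}(\Gamma - e)$ is block diagonal with diagonal blocks $L^{\pm}(\Gamma_1)$ and $L^{\pm}(\Gamma_2)$, so its nullity is the sum of the nullities of the blocks. Concretely, by Lemma \ref{lem:empty}(1) one obtains $\eta(\Gamma - e) = \eta(\Gamma_1) + \eta(\Gamma_2)$.

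Next I would apply Corollary \ref{col:nulltiy} to the single edge $e$, which gives $\eta(\Gamma - e) \leq \eta(\Gamma) + 1$, that is, $\eta(\Gamma) \geq \eta(\Gamma - e) - 1$. Substituting the identity from the previous step yields $\eta(\Gamma) \geq \eta(\Gamma_1) + \eta(\Gamma_2) - 1$, which is exactly the claim.

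There is essentially no hard step here: the substance of the lemma is that the cut-edge assumption is what lets $\eta(\Gamma-e)$ split as $\eta(\Gamma_1)+\eta(\Gamma_2)$, after which Corollary \ref{col:nulltiy} finishes the argument. The only point worth a line of care is that $\Gamma_1$ and $\Gamma_2$ need not themselves be connected when $\Gamma$ is disconnected; but the nullity is additive over an arbitrary disjoint union (it is just the total nullity of a block-diagonal matrix), so applying Lemma \ref{lem:empty}(1) to the full list of connected components of $\Gamma - e$ — which is the union of the component lists of $\Gamma_1$ and of $\Gamma_2$ — still gives the required identity $\eta(\Gamma-e)=\eta(\Gamma_1)+\eta(\Gamma_2)$.
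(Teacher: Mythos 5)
Your proof is correct, but it follows a genuinely different route from the paper's. You obtain the lemma by combining two facts already established: since $e$ is a cut edge, $L^{\pm}(\Gamma-e)$ is block diagonal with blocks $L^{\pm}(\Gamma_1)$ and $L^{\pm}(\Gamma_2)$, so $\eta(\Gamma-e)=\eta(\Gamma_1)+\eta(\Gamma_2)$, and then Corollary \ref{col:nulltiy} gives $\eta(\Gamma)\geq\eta(\Gamma-e)-1$. Your point of care is the right one: Lemma \ref{lem:empty}(1) is stated for the decomposition into connected components, but additivity of nullity over an arbitrary disjoint union is immediate from the block-diagonal structure, so the step is sound; there is also no circularity, since Corollary \ref{col:nulltiy} rests only on Lemma \ref{lem:interlacing} and the Courant--Weyl inequalities, not on the present lemma. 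The paper instead argues directly on the matrix: it writes $L^{\pm}(\Gamma)$ in block form around $u$ and $v$, adds all other rows and columns to the row and column indexed by $u$ (using that all row and column sums of $L^{\pm}(\Gamma)$ vanish) so that this row and column become zero, and then observes that the surviving block indexed by $V(\Gamma_2)$ equals $L^{\pm}(\Gamma_2)$ plus a rank-one correction $\sigma(e)$ in the $(v,v)$ entry, whence $r(\Gamma)\leq r(\Gamma_1)+r(\Gamma_2)+1$, which is the same inequality in rank form. The comparison: your argument is shorter and maximally reuses the interlacing machinery of Section 2, while the paper's rank computation is self-contained linear algebra that runs parallel to the proof of the cut-vertex analogue, Lemma \ref{lem:ident} (where the identical row/column trick yields an exact equality rather than an inequality), and it makes explicit that the loss of $1$ comes precisely from the single perturbing entry $\sigma(e)$.
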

\begin{proof}
By arranging the vertices of $\Gamma$ appropriately we can write $L^{\pm}(\Gamma)$ as
\[ \bordermatrix{
  &   & u & v &  & \cr
  & B & \alpha & \textbf{0} & \textbf{0} \cr
u & \alpha^{\top} & d^{\pm}_{\Gamma}(u) & -\sigma(e) & \textbf{0}^{\top} \cr
v & \textbf{0}^{\top} & -\sigma(e) & d^{\pm}_{\Gamma}(v) & \gamma^{\top} \cr
  & \textbf{0} & \textbf{0} & \gamma & D \cr
}.
\]
Adding all other rows and columns to the row and column indexed by $u$, respectively, we obtain a matrix
\begin{equation*}
\centering
{\begin{matrix}
M = \begin{pmatrix}
  B & \textbf{0} & \textbf{0} & \textbf{0} \cr
  \textbf{0}^{\top} & 0 & 0 & \textbf{0}^{\top} \cr
  \textbf{0}^{\top} & 0 & d^{\pm}_{\Gamma}(v) & \gamma^{\top} \cr
  \textbf{0} & \textbf{0} & \gamma & D \cr
\end{pmatrix} = \begin{pmatrix}
  B & \textbf{0} & \textbf{0} & \textbf{0} \cr
  \textbf{0}^{\top} & 0 & 0 & \textbf{0}^{\top} \cr
  \textbf{0}^{\top} & 0 & d^{\pm}_{\Gamma}(v)-\sigma(e) & \gamma^{\top} \cr
  \textbf{0} & \textbf{0} & \gamma & D \cr
\end{pmatrix}
 + \begin{pmatrix}
  \textbf{0} & \textbf{0} & \textbf{0} & \textbf{0} \cr
  \textbf{0}^{\top} & 0 & 0 & \textbf{0}^{\top} \cr
  \textbf{0}^{\top} & 0 & \sigma(e) & \textbf{0}^{\top} \cr
  \textbf{0} & \textbf{0} & \textbf{0} & \textbf{0} \cr
\end{pmatrix}.
\end{matrix}}
\end{equation*}
Note that the rank of $L^{\pm}(\Gamma)$ is the same with that of $M$, $r(B) = r (\Gamma_1)$ and $\sigma(e) \neq 0$. Thus, $r(\Gamma) = r(M) \leq r(\Gamma_1) + r(\Gamma_2) + 1$, which means that $\eta(\Gamma) \geq \eta(\Gamma_1) + \eta(\Gamma_2) - 1$.
\end{proof}

\begin{lemma}\label{lem:ident}
If $\Gamma$ is a signed graph with a cut vertex $w$ and $\Gamma = \Gamma_1 \cdot \Gamma_2$. Then $\eta(\Gamma) = \eta(\Gamma_1) + \eta(\Gamma_2) - 1$.
\end{lemma}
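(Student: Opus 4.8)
The plan is to mimic the structure of Lemma \ref{lem:cut_edge} but exploit the stronger combinatorial structure afforded by a cut \emph{vertex}. The key observation is that the coalescence $\Gamma = \Gamma_1 \cdot \Gamma_2$ at $w$ decomposes $L^{\pm}(\Gamma)$ almost into a block form: if I order the vertices as $V(\Gamma_1 - w)$, then $w$, then $V(\Gamma_2 - w)$, the matrix $L^{\pm}(\Gamma)$ has zero blocks linking $V(\Gamma_1 - w)$ to $V(\Gamma_2 - w)$, since no edge of $\Gamma$ joins a vertex strictly inside $\Gamma_1$ to one strictly inside $\Gamma_2$. The only coupling between the two sides is through the row/column indexed by $w$, whose diagonal entry is the net-degree $d^{\pm}_{\Gamma}(w) = d^{\pm}_{\Gamma_1}(w) + d^{\pm}_{\Gamma_2}(w)$.

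First I would write $L^{\pm}(\Gamma)$ in this bordered form and then use the additivity of the $w$-contributions to split the central diagonal entry. Concretely, I expect to decompose $L^{\pm}(\Gamma)$ as a sum $M_1 + M_2$, where $M_1$ carries the full $\Gamma_1$-block (including $w$'s $\Gamma_1$-side net-degree on the diagonal) and $M_2$ carries the full $\Gamma_2$-block, with the $w$-diagonal split between them as $d^{\pm}_{\Gamma_1}(w)$ and $d^{\pm}_{\Gamma_2}(w)$. The point is that $M_i$, viewed on the vertex set $V(\Gamma_i)$, is \emph{exactly} $L^{\pm}(\Gamma_i)$ (padded with zeros), because the net Laplacian of the induced subgraph $\Gamma_i$ records precisely the net-degree $w$ accumulates from its neighbours inside $\Gamma_i$. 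This is the crucial structural fact that a cut vertex gives us but a cut edge does not.

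The rank computation then proceeds by relating $\eta(\Gamma)$ to $\eta(\Gamma_1)$ and $\eta(\Gamma_2)$ through the overlap at $w$. I would argue that a vector $x$ lies in the kernel of $L^{\pm}(\Gamma)$ if and only if its restriction $x_1$ to $V(\Gamma_1)$ lies in $\ker L^{\pm}(\Gamma_1)$ and its restriction $x_2$ to $V(\Gamma_2)$ lies in $\ker L^{\pm}(\Gamma_2)$, \emph{provided} the two restrictions agree at the shared coordinate $w$; this compatibility condition is what turns a naive $\eta(\Gamma_1) + \eta(\Gamma_2)$ count into $\eta(\Gamma_1) + \eta(\Gamma_2) - 1$. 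Here the all-ones vector (or its $\Gamma_i$-analogue) plays a role: because each $L^{\pm}(\Gamma_i)$ has zero row sums, the constant vector is always in both kernels and agrees at $w$, so the gluing is never vacuous. The main obstacle I anticipate is verifying carefully that the central $w$-equation is automatically satisfied once both side-equations hold — that is, showing the two blocks' contributions at row $w$ combine consistently rather than imposing an extra independent constraint. Once that is checked, the dimension count gives $\eta(\Gamma) = \eta(\Gamma_1) + \eta(\Gamma_2) - 1$ directly, which is sharper than the inequality in Lemma \ref{lem:cut_edge} and reflects that deleting an edge can lose information that identifying at a vertex preserves.
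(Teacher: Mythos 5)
Your approach is correct, and it is genuinely different from the paper's. The paper argues at the level of \emph{ranks}: after writing $L^{\pm}(\Gamma)$ in the bordered form you describe, it adds all other rows and columns to the row and column indexed by $w$; since every row and column of $L^{\pm}(\Gamma)$ sums to zero, this annihilates $w$'s row and column and leaves a block diagonal matrix $\mathrm{diag}(B_1,0,B_2)$, where $B_i$ is $L^{\pm}(\Gamma_i)$ with the $w$ row and column deleted. The same zero-sum redundancy gives $r(B_i)=r(\Gamma_i)$, hence $r(\Gamma)=r(\Gamma_1)+r(\Gamma_2)$, and the formula follows from $|V(\Gamma)|=|V(\Gamma_1)|+|V(\Gamma_2)|-1$. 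You instead argue at the level of \emph{kernels}, via the decomposition $L^{\pm}(\Gamma)=M_1+M_2$ into padded net Laplacians and the identification of $\ker L^{\pm}(\Gamma)$ with pairs $(x_1,x_2)\in\ker L^{\pm}(\Gamma_1)\times\ker L^{\pm}(\Gamma_2)$ agreeing at $w$. The obstacle you flag --- that the row-$w$ equation must be automatic --- is resolved exactly as you hope: since $\mathbf{1}^{\top}L^{\pm}(\Gamma_1)=\mathbf{0}^{\top}$, the entries of $L^{\pm}(\Gamma_1)x_1$ sum to zero, so if they vanish at every vertex of $\Gamma_1-w$ they vanish at $w$ as well; the same holds on the $\Gamma_2$ side, so the two contributions to row $w$ vanish separately rather than merely cancelling. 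Your argument buys an explicit description of $\ker L^{\pm}(\Gamma)$, which the paper's rank computation does not provide; the paper's reduction is shorter and stays entirely within elementary row/column operations. Both hinge on the same structural fact, the zero row and column sums of a net Laplacian.

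One step needs sharpening. To get exactly $\eta(\Gamma_1)+\eta(\Gamma_2)-1$ you must show that the compatibility condition $x_1(w)=x_2(w)$ is a genuine codimension-one constraint on $\ker L^{\pm}(\Gamma_1)\times\ker L^{\pm}(\Gamma_2)$, i.e.\ that the linear functional $(x_1,x_2)\mapsto x_1(w)-x_2(w)$ is not identically zero there. Your stated justification --- that the all-ones vectors lie in both kernels and agree at $w$ --- shows only that the glued space is nonzero (so $\eta(\Gamma)\geq 1$); it does not rule out the constraint being vacuous, in which case the count would be $\eta(\Gamma_1)+\eta(\Gamma_2)$. The correct witness is a pair \emph{violating} the constraint, e.g.\ $(\mathbf{1},\mathbf{0})$, which takes the values $1\neq 0$ at $w$; this works precisely because of the zero-row-sum fact you already cite, namely $\mathbf{1}\in\ker L^{\pm}(\Gamma_1)$. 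With that one-line fix your proof is complete.
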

\begin{proof}
By arranging the vertices of $\Gamma$ appropriately we can write $L^{\pm}(\Gamma)$ as
\[ \bordermatrix{
  &   & w &  &  \cr
  & B_1 & \alpha_1 & \textbf{0} \cr
w & \alpha^{\top}_1 & d_{\Gamma}^{\pm}(w) & \alpha_2^{\top}  \cr
  & \textbf{0} & \alpha_2 & B_2 \cr
}.
\]
Adding all other rows and columns to the row and column indexed by $w$, respectively, we obtain a matrix
\[
\begin{pmatrix}
 B_1 & \textbf{0} & \textbf{0} \\
 \textbf{0}^{\top} & 0 & \textbf{0}^{\top}  \\
 \textbf{0} & \textbf{0} & B_2
\end{pmatrix}.
\]

\noindent Note that $r(B_1) = r(\Gamma_1)$ and $r(B_2) = r(\Gamma_2)$. Thus, we have $r(\Gamma) = r(B_1) + r(B_2) = r(\Gamma_1) + r(\Gamma_2)$, which means that $\eta(\Gamma) = n + 1 - r(\Gamma_1) - r(\Gamma_2) - 1 = \eta(\Gamma_1) + \eta(\Gamma_2) - 1$.
\end{proof}

With the help of Lemmas \ref{lem:tree} and \ref{lem:ident}, we present a corollary below, which can simplify the structures of the signed graphs we consider.

\begin{corollary}\label{col:ident}
If $\Gamma = \Gamma_1 \cdot T$ is a signed graph and $T$ is a signed tree. Then $\eta(\Gamma) = \eta(\Gamma_1)$.
\end{corollary}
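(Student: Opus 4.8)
The plan is to prove Corollary~\ref{col:ident} as a direct application of the two lemmas immediately preceding it, namely Lemma~\ref{lem:tree} (which tells us $\eta(T)=1$ for any signed tree $T$) and Lemma~\ref{lem:ident} (which computes the nullity of a coalescence at a cut vertex). First I would observe that since $\Gamma = \Gamma_1 \cdot T$ is the coalescence of $\Gamma_1$ and the signed tree $T$ at a common cut vertex $w$, the hypotheses of Lemma~\ref{lem:ident} are satisfied with $\Gamma_2 = T$. Applying that lemma directly yields
\[
\eta(\Gamma) = \eta(\Gamma_1) + \eta(T) - 1.
\]

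Next I would substitute the value of $\eta(T)$. By Lemma~\ref{lem:tree}, every signed tree has nullity exactly $1$, so $\eta(T) = 1$. Plugging this into the displayed equation gives $\eta(\Gamma) = \eta(\Gamma_1) + 1 - 1 = \eta(\Gamma_1)$, which is precisely the claimed identity. This completes the argument.

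I do not anticipate any genuine obstacle here, since the statement is essentially a specialization of Lemma~\ref{lem:ident} to the case where one of the two pieces of the coalescence is a tree. The only point requiring a moment's care is a boundary issue in the definition of coalescence itself: one should check that the decomposition $\Gamma = \Gamma_1 \cdot T$ with $w$ a cut vertex is interpreted so that both $\Gamma_1$ and $T$ genuinely contain the identified vertex $w$ and each has at least one edge or is otherwise a legitimate signed graph, so that Lemma~\ref{lem:ident} applies verbatim. Provided the coalescence is well-defined in the sense used earlier in Section~\ref{sec:pre}, the proof is a one-line combination of the two preceding results.

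The main value of this corollary, as the surrounding text indicates, is as a structural reduction tool: it shows that hanging a tree off any vertex of a signed graph does not change the nullity. In the later proof of Theorem~\ref{tho:nullupp} this will let us repeatedly prune pendant trees (and in particular pendant edges and pendant paths) from a signed graph without affecting $\eta(\Gamma)$, thereby reducing to graphs built essentially from their cycles. I would therefore keep the proof itself maximally short and transparent, since its role is to be invoked many times downstream rather than to contain any subtlety of its own.
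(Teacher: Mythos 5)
Your proof is correct and is exactly the argument the paper intends: the corollary is stated immediately after Lemmas~\ref{lem:tree} and \ref{lem:ident} precisely so that it follows by applying Lemma~\ref{lem:ident} with $\Gamma_2 = T$ and substituting $\eta(T)=1$. Nothing more is needed.
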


It is clear from Corollary \ref{col:ident} that, when we consider the nullity of a signed graph $\Gamma = \Gamma_1 \cdot T_{1} \cdot T_{2} \cdot \cdots \cdot T_k$, where $T_{1}, T_{2}, \cdots, T_k$ are signed trees, we only need to determine the nullity of $\Gamma_1$. So in the following proof, we can always assume that the signed graph with no pendent signed trees.

\indent Now we are in a position to prove the main result.

\begin{proof}[Proof of Theorem \ref{tho:nullupp}]
For convenience we abbreviate $\beta(\Gamma)$ as $\beta$ and choose edges $e_1, \cdots , e_\beta$ from $\Gamma$ such that $T =\Gamma - \{e_1, \cdots, e_\beta \}$ is a signed tree. Then by Corollary \ref{col:nulltiy}, we have
\begin{gather*}
\eta(\Gamma) - 1 \leq \eta(\Gamma-e_1),\\
\eta(\Gamma) - 2  \leq \eta(\Gamma-e_1) - 1   \leq \eta(\Gamma- \{e_1, e_2\}),\\
\vdots\\
\eta(\Gamma) - \beta  \leq \eta(\Gamma- \{e_1, \cdots, e_{\beta-1} \} ) - 1   \leq \eta(\Gamma- \{e_1, \cdots ,e_\beta \}).
\tag{3.1}\label{inequ:main}
\end{gather*}
From the inequalities of (\ref{inequ:main}), we obtain $\eta(\Gamma)  \leq \eta(T) + \beta = 1 + \beta$.
By Lemma \ref{lem:empty} $(2)$, there does not exist any $n$-vertices connected signed graphs with nullity $n$, which means $\eta(\Gamma)\leq n-1$. So we obtain the inequalities of $(i)$.\\
\indent When $\eta(\Gamma) = n - 1$ (i.e., $r(\Gamma) = 1$), if there exists an element $l_{ij}$ of $L^{\pm}(\Gamma)$ equal to $0$, then all the elements in the same row and column with it are $0$s, which is contrary to the fact $\Gamma$ is connected. Thus, $\Gamma$ is a signed complete graph and so $d_{\Gamma}^{\pm}(v_i) \in \{ -1,1 \}$, where $d^{\pm}_{\Gamma}(v_i)$ is the net-degree of any vertex $v_i$ in $\Gamma$. Without loss of generality, assume that there exists a vertex $v_1$ in $\Gamma$ with $d^{\pm}_{\Gamma}(v_1) = 1$ (if not, we can consider the signed graph $-\Gamma$ instead of $\Gamma$), which means that $d_{\Gamma}^{+}(v_1) = n/2$ and $d_{\Gamma}^{-}(v_1) = n/2 - 1$. Thus $n$ is an even number. By arranging the vertices of $\Gamma$ appropriately, we denote by $v_2, \cdots, v_{\frac{n}{2}}$ the all negative neighbours of $v_1$. So far we have determined the elements in the row and column indexed by $v_1$ in $L^{\pm}(\Gamma)$. Using the condition $r(\Gamma) = 1$, we can write $L^{\pm}(\Gamma)$ as
\[ \bordermatrix{
  & v_1  & v_2 & \cdots & v_{\frac{n}{2}} & v_{\frac{n}{2}+1} & \cdots & v_{n-1}  & v_{n} \cr
v_1  & 1 & 1 & \cdots & 1 & -1 & \cdots & -1  & -1 \cr
v_2 & 1 & 1 & \cdots & 1 & -1 & \cdots & -1  & -1 \cr
\vdots & \vdots & \vdots & \ddots & \vdots & \vdots & \ddots & \vdots  & \vdots \cr
v_{\frac{n}{2}}  & 1 & 1 & \cdots & 1 & -1 & \cdots & -1   & -1 \cr
v_{\frac{n}{2}+1} & -1 & -1 & \cdots & -1 & 1 & \cdots & 1   & 1 \cr
\vdots & \vdots & \vdots & \ddots & \vdots & \vdots & \ddots & \vdots  & \vdots \cr
v_{n-1} & -1 & -1 & \cdots & -1 & 1 & \cdots & 1  & 1 \cr
v_{n} & -1 & -1 & \cdots & -1 & 1 & \cdots & 1  & 1 \cr
}.
\]
So $\Gamma = K_{\frac{n}{2}} \triangledown^{-} K_{\frac{n}{2}}$ or $ -(K _{\frac{n}{2}} \triangledown^{-} K_{\frac{n}{2}})$. This shows the necessity of $(ii)$ and the sufficiency is obvious.

\indent Assume that $\Gamma$ is a signed cactus graph which contains no pendent trees in the rest of proof. \\
\indent For $(iii)$, the assertion follows from Lemmas \ref{lem:tree} and \ref{lem:unicyc} when $\beta = 0,1$. Therefore, we divide the proof into two cases in which Case $2$ will rely on the induction on the cyclomatic number of $\Gamma$ and Case $1$ follows from a direct computation. \\
\indent\textbf{Case 1:} $\Gamma$ has no cut edges.\\
Assume that $\Gamma$ is obtained by a series of coalescence of the cycles $C_1, \cdots, C_\beta$. By Lemma \ref{lem:ident} we have $\eta(\Gamma) = \eta(C_1) + \cdots + \eta(C_\beta) - \beta + 1 = \beta - \beta + 1 = 1$, where the second equality is from Lemma \ref{lem:unicyc} and $m^+(C_i) \neq m^-(C_i)$ for each $i = 1, \cdots, \beta$.\\
\indent In the latter case, we assume that the assertion holds for signed cactus graphs with cyclomatic number at most $\beta - 1$. Recall that $\eta(\Gamma) = 1$ if and only if the coefficient $c_1$ of the linear term of $P_{L^{\pm}(\Gamma)}(x)$ does not equal to zero. \\
\indent\textbf{Case 2:} $\Gamma$ has a cut edge $e = uv$.\\
Denote the signed graph $\Gamma-e$ as $\Gamma_1 \; \dot \cup \;\Gamma_2$ such that $1 \leq \beta(\Gamma_1), \beta(\Gamma_2) \leq \beta - 1$. Since each cycle contained in $\Gamma_1$ or $\Gamma_2$ also has distinct numbers of positive and negative edges, we have
$\eta(\Gamma_1) = 1 = \eta(\Gamma_2)$ by the inductive hypothesis. Thus,
\begin{gather*}
\sum_{F^1(\Gamma_1) \in \mathcal{F}^1(\Gamma_1)} \sigma(F^1(\Gamma_1)) \neq 0,  \sum_{F^1(\Gamma_2) \in \mathcal{F}^1(\Gamma_2)} \sigma(F^1(\Gamma_2)) \neq 0.
\end{gather*}
Since $e$ is a cut edge, any spanning tree of $\Gamma$ must contain edge $e$. Then we have
$$
\mathcal{F}^1(\Gamma) = \bigcup_{F^1(\Gamma_1) \in \mathcal{F}^1(\Gamma_1)} \bigcup_{F^1(\Gamma_2) \in \mathcal{F}^1(\Gamma_2)} \{ F^1(\Gamma_1) \cup F^1(\Gamma_2) + e \},
$$
where $F^1(\Gamma_1) \cup F^1(\Gamma_2) + e$ is obtained by adding an edge $e$ to $F^1(\Gamma_1) \cup F^1(\Gamma_2)$,
and so
\begin{gather*}
\sum_{F^1(\Gamma) \in \mathcal{F}^1(\Gamma)} \sigma(F^1(\Gamma)) = (\sum_{F^1(\Gamma_1) \in \mathcal{F}^1(\Gamma_1)} \sigma(F^1(\Gamma_1))) \cdot \sigma(e) \cdot (\sum_{F^1(\Gamma_2) \in \mathcal{F}^1(\Gamma_2)} \sigma(F^1(\Gamma_2))).
\end{gather*}
Thus, $c_1 \neq 0$ if and only if both $(\sum_{F^1(\Gamma_1) \in \mathcal{F}^1(\Gamma_1)} \sigma(F^1(\Gamma_1)))$ and $(\sum_{F^1(\Gamma_2) \in \mathcal{F}^1(\Gamma_2)} \sigma(F^1(\Gamma_2)))$ do not equal to zero, that is, $c_1 \neq 0$ if and only if both $\Gamma_1$ and $\Gamma_2$ are signed cactus graphs in which each cycle has distinct numbers of positive and negative edges. The cycles of $\Gamma$ consist of that of $\Gamma_1$ and $\Gamma_2$, so by inductive hypothesis we complete the proof in this case.\\
\indent If $\eta(\Gamma)  = \beta + 1$, suppose to the contrary that there exists a signed cycle $C$ in $\Gamma$
such that $m^+(C) \neq m^{-}(C)$. We take $\beta-1$ edges $e_1, \cdots, e_{\beta-1}$ in $\Gamma$ such  that $U = \Gamma - \{e_1,\cdots,e_{\beta-1} \}$ is a signed unicyclic graph with the unique cycle $C$. By Corollary \ref{col:nulltiy} and Lemma \ref{lem:unicyc} we obtain $2 = \eta(\Gamma) - \beta + 1 \leq \eta(\Gamma - \{e_1, \cdots, e_{\beta-1} \}) = 1$, a contradiction.  This shows the necessity of $(iv)$. \\
\indent In what follows, we will prove $\eta(\Gamma) = \beta + 1$, where $\Gamma$ is a signed cactus graph in which any cycle $C$ has $m^+(C) = m^-(C)$. We also divide our proof into two parts according to two cases.\\
\indent\textbf{Case 1:} $\Gamma$ has no cut edges.\\
Assume that $C_1, \cdots, C_\beta$ are all cycles in $\Gamma$. In fact, $\Gamma$ can be obtained by a series of coalescence of these cycles. So from Lemma \ref{lem:ident} we have $\eta(\Gamma) = \eta(C_1) + \cdots + \eta({C_\beta}) - \beta + 1 = 2\beta - \beta + 1 = \beta + 1$, where the second equality is from Lemma \ref{lem:unicyc} and $m^+(C_i) = m^-(C_i)$ for each $i = 1, \cdots, \beta$.\\
\indent In the remaining case, we proceed by induction on the cyclomatic number of $\Gamma$. Assume that the assertion holds for all signed cactus graphs with cyclomatic number at most $\beta - 1$ and let $\Gamma$ be a signed cactus graph with cyclomatic number $\beta$.\\
\indent\textbf{Case 2:} $\Gamma$ has a cut edge $e = uv$.\\
Denote the signed graph $\Gamma-e$ as $\Gamma_1 \; \dot \cup \; \Gamma_2$ such that $1 \leq \beta(\Gamma_1), \beta(\Gamma_2) \leq \beta-1$, where $\beta(\Gamma_1)$ and $\beta(\Gamma_2)$ are the cyclomatic numbers of $\Gamma_1$ and $\Gamma_2$, respectively. Note that each cycle contained in $\Gamma_1$ or $\Gamma_2$ also has the equal number of positive and negative edges and $\beta(\Gamma)= \beta(\Gamma_1) + \beta(\Gamma_2)$. By Lemma \ref{lem:cut_edge} and inductive hypothesis we have
\begin{gather*}
\eta(\Gamma) \geq \eta(\Gamma_1) + \eta(\Gamma_2) - 1 = \beta(\Gamma_1) + 1 + \beta(\Gamma_2) + 1 - 1 = \beta(\Gamma) + 1.
\end{gather*}
Combining this with $\eta(\Gamma) \leq \beta+1$ we obtain the conclusion in this case.
\end{proof}

\begin{figure}[htbp]\centering\hspace{-1.2cm}
\scalebox{0.4}{\includegraphics[width=8cm]{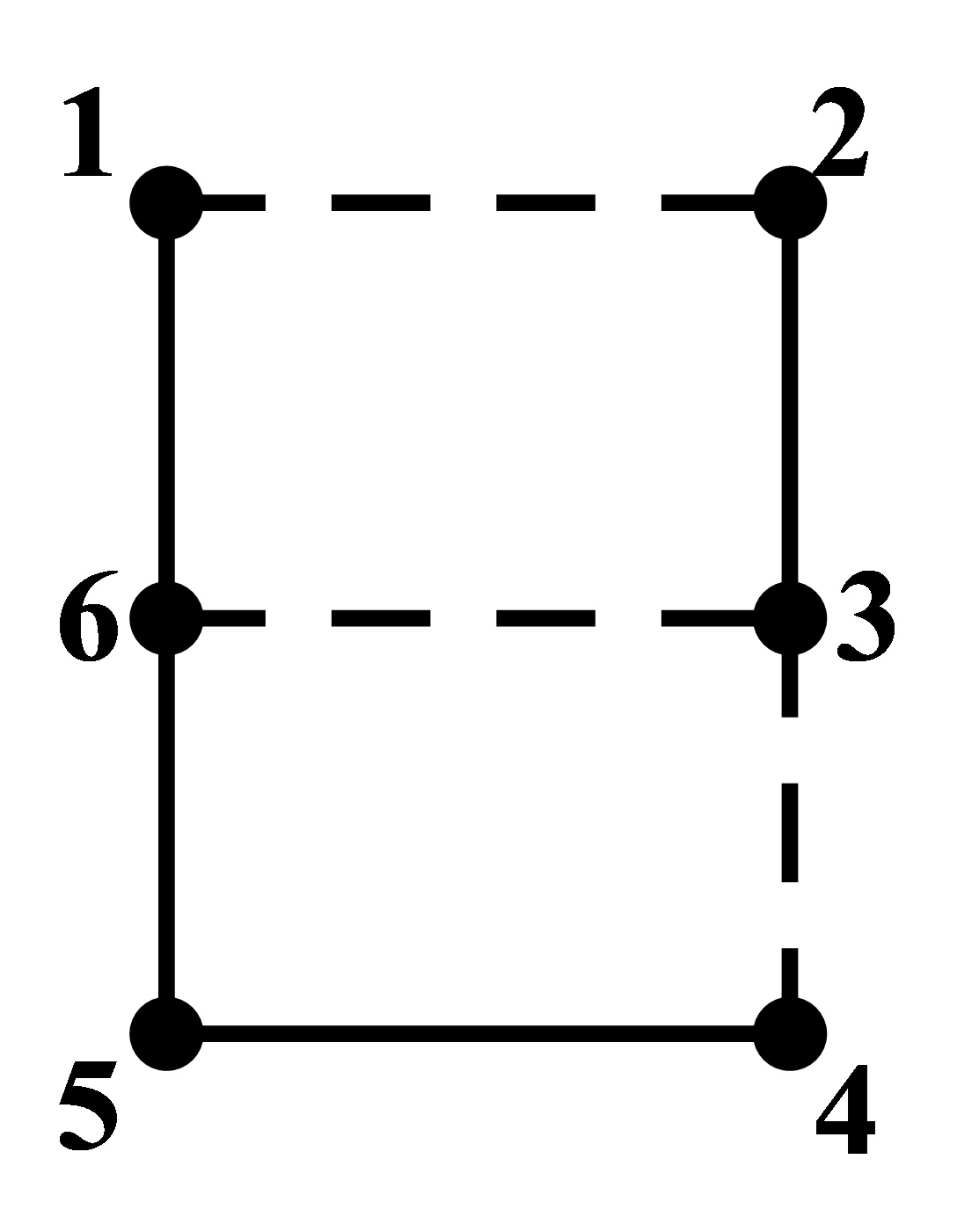}}

Figure 1. A signed graph of nullity $1$.

\end{figure}

With the help of Theorem \ref{tho:nullupp}, for a signed cactus graph $\Gamma$, we study how $\eta(\Gamma)$ changes when we delete an edge of them.
\begin{corollary}
Let $\Gamma$ be a signed cactus graph with cyclomatic number $\beta$ and $e$ be an edge of any cycle in $\Gamma$.\\
\indent $(1)$ If $m^+(C) = m^-(C)$ for any cycle $C$ of $\Gamma$, then $\eta(\Gamma-e) = \eta(\Gamma) - 1.$\\
\indent $(2)$ If $m^+(C) \neq m^-(C)$ for any cycle $C$ of $\Gamma$, then $\eta(\Gamma-e) = \eta(\Gamma) = 1.$
\end{corollary}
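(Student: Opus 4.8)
The final statement is a corollary about how the nullity of a signed cactus graph changes when we delete an edge $e$ lying on some cycle $C$. Both parts should follow from combining Theorem~\ref{tho:nullupp}(iii)--(iv) with the interlacing Corollary~\ref{col:nulltiy}. The key structural observation is that deleting an edge $e$ of a cycle $C$ in a cactus graph breaks exactly that one cycle and leaves all other cycles intact, so $\Gamma - e$ is again a signed cactus graph, now with cyclomatic number $\beta - 1$, and its cycles are precisely the cycles of $\Gamma$ other than $C$.

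\textbf{Part (1).} The plan is as follows. Since $m^+(C) = m^-(C)$ for every cycle $C$ of $\Gamma$, Theorem~\ref{tho:nullupp}(iv) gives $\eta(\Gamma) = \beta + 1$. Now $\Gamma - e$ is a signed cactus graph with cyclomatic number $\beta - 1$, and every cycle of $\Gamma - e$ (being a cycle of $\Gamma$ different from $C$) still satisfies the balanced-edge condition $m^+ = m^-$. Hence Theorem~\ref{tho:nullupp}(iv) applies again to $\Gamma - e$ and yields $\eta(\Gamma - e) = (\beta - 1) + 1 = \beta$. Therefore $\eta(\Gamma - e) = \eta(\Gamma) - 1$, as claimed. (Corollary~\ref{col:nulltiy} is not even strictly needed here, but it confirms consistency: the drop of exactly $1$ is permitted by the interlacing bound $\eta(\Gamma) - 1 \leq \eta(\Gamma - e) \leq \eta(\Gamma) + 1$.)

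\textbf{Part (2).} Here $m^+(C) \neq m^-(C)$ for every cycle $C$ of $\Gamma$, so Theorem~\ref{tho:nullupp}(iii) gives $\eta(\Gamma) = 1$. Again $\Gamma - e$ is a signed cactus graph, and its cycles are exactly the cycles of $\Gamma$ other than $C$, each of which still satisfies $m^+ \neq m^-$. Thus Theorem~\ref{tho:nullupp}(iii) applies to $\Gamma - e$ (note $\Gamma - e$ is still connected, since $e$ lies on a cycle and hence is not a cut edge) and gives $\eta(\Gamma - e) = 1$. So $\eta(\Gamma - e) = \eta(\Gamma) = 1$.

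The only point requiring a little care — and the step I would flag as the main (though minor) obstacle — is verifying that $\Gamma - e$ really is a connected signed cactus graph whose cycle set is exactly $\{C' : C' \text{ a cycle of } \Gamma,\ C' \neq C\}$. This rests on two facts about cactus graphs: first, any edge lying on a cycle is not a cut edge, so deleting it preserves connectedness; and second, because in a cactus any two cycles share at most one vertex (and hence no edge), removing $e \in E(C)$ destroys the cycle $C$ without affecting any other cycle. Once connectedness and the cycle-set description are established, both parts are immediate applications of the corresponding characterizations in Theorem~\ref{tho:nullupp}. I would state these two facts explicitly at the start of the proof and then invoke parts (iii) and (iv) to conclude.
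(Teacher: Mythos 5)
Your proof is correct. Part (2) coincides exactly with the paper's own argument: $\Gamma - e$ is a connected signed cactus whose cycles all still satisfy $m^+ \neq m^-$, so Theorem \ref{tho:nullupp}(iii) applies to it directly. For part (1), however, you take a genuinely different (if equally short) route. The paper never re-examines $\Gamma - e$ as a cactus at all: it observes that when $\eta(\Gamma) = \beta + 1$, the chain of inequalities (\ref{inequ:main}) in the proof of Theorem \ref{tho:nullupp}, which terminates in $\eta(T) = 1$ for a spanning tree $T$, is forced to consist entirely of equalities; since the edge $e$ may be chosen as the first edge $e_1$ deleted in that chain, $\eta(\Gamma - e) = \eta(\Gamma) - 1$ follows at once. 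You instead reapply the statement of Theorem \ref{tho:nullupp}(iv) to $\Gamma - e$, which obliges you to check that $\Gamma - e$ is a connected signed cactus of cyclomatic number $\beta - 1$ whose cycles are precisely the cycles of $\Gamma$ other than $C$ --- the two structural facts (a cycle edge of a cactus is not a cut edge; distinct cycles of a cactus are edge-disjoint, so deleting $e$ kills only $C$) that you correctly single out and justify. Both arguments are sound. Yours has the merit of relying only on the statement of the theorem rather than on the internals of its proof, and it makes the structural bookkeeping explicit; the paper's version is slightly shorter and yields a bit more, namely that deleting cycle edges one at a time decreases the nullity by exactly one at every step of the chain, not just at the first deletion.
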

\begin{proof} As we have shown in the proof of Theorem \ref{tho:nullupp}, all the inequalities of (\ref{inequ:main}) become equalities when $\eta(\Gamma) = \beta + 1$, which leads to $(1)$.\\
\indent For $(2)$, Since $\Gamma-e$ is also a signed cactus graph in which any cycle has distinct numbers of positive and negative edges, then by Theorem \ref{tho:nullupp} we have $\eta(\Gamma-e) = 1 = \eta(\Gamma)$.
\end{proof}

\noindent \textbf{Concluding remarks.} Note, however, that we only consider signed cactus graphs achieved the nullity $0$ or $\beta(\Gamma) + 1$. For signed graphs in which two cycles have common edges, the discussion seems more complicated. For example, the nullity of the signed graph shown in Figure $1$
is $1$, but there exist two cycles of this signed graph such that the numbers of positive and negative edges on both of them are equal. Thus, the problem of characterizing all the signed graphs with nullity $1$ or $\beta(\Gamma) + 1$ are need to be resolved in further study.\\

\baselineskip=0.25in

\end{document}